\date{\today}
\newtheorem{theorem}{Theorem}[section]
\newtheorem{lemma}[theorem]{Lemma}
\theoremstyle{definition}
\newcommand{\U}{\mathcal U}
\newcommand{\w}{\omega}
\newcommand{\G}{\mathcal{G}}
\newcommand{\M}{\mathcal{M}}
\newcommand{\F}{\mathcal{F}}
\newcommand{\T}{\mathcal{T}}
\newcommand{\V}{\mathcal{V}}
\newcommand{\add}{\operatorname{add}}
\newcommand{\cov}{\operatorname{cov}}
\newcommand{\ttt}{\mathfrak t}
\newcommand{\bb}{\mathfrak b}
\newcommand{\dd}{\mathfrak d}
\newcommand{\e}{\varepsilon}
\newcommand{\la}{\langle}
\newcommand{\ra}{\rangle}
\newcommand{\nothing}[1]{}
\title[Productiv Lindel\"ofness and  Hurewicz' property]{Productively
Lindel\"of spaces and the covering property of Hurewicz}
\author{Du\v{s}an Repov\v{s} and  Lyubomyr Zdomskyy}
\address{Faculty of Education, and Faculty of Mathematics and Physics,
University of Ljubljana, P. O. Box 2964, Ljubljana, Slovenia 1001.}
\email{dusan.repovs@guest.arnes.si}
\urladdr{http://www.fmf.uni-lj.si/\~{}repovs/index.htm}
\address{Kurt G\"odel Research Center for Mathematical Logic,
University of Vienna, W\"ahringer Stra\ss e 25, A-1090 Wien,
Austria.} \email{lzdomsky@gmail.com}
\urladdr{http://www.logic.univie.ac.at/\~{}lzdomsky/}
\subjclass[2010]{Primary: 54D20, 54A35; Secondary:  03E17.}
\keywords{Productively Lindel\"of
spaces, Menger property, Hurewicz property, Filter Dichotomy, cardinal characteristics.}
\thanks{The first author acknowledges support by SRA grants P1-0292-0101 and J1-4144-0101.
The second author would  like to acknowledge the generous support
of FWF grant M1244-N13.}
\begin{document}

\begin{abstract}
We prove that
under certain set-theoretic assumptions
 every productively Lindel\"of space has the Hurewicz covering property,
 thus improving
upon some earlier results  of  Aurichi and Tall.
\end{abstract}

\maketitle

\section{Introduction}
A topological space $X$ is called \emph{productively Lindel\"of } if
$X\times Y$ is Lindel\"of for every Lindel\"of space $Y$. This
terminology was introduced in \cite{BarKenRap07}, but the concept
itself goes back at least to the classical work  of
Michael \cite{Mic63} who proved that under CH the space of irrational numbers is
not productively Lindel\"of. The natural question of whether an
additional set-theoretic hypothesis is needed here has become known
as \emph{Michael's problem} and is still open. Thus we are at the moment far
from a satisfactory  understanding of productive Lindel\"ofness,
even for subspaces of the Baire space $\w^\w$.

In a stream of recent papers of  Tall and
collaborators  it was proven  that under certain
equalities between cardinal characteristics all metrizable
productively Lindel\"of spaces have strong covering properties
close to the $\sigma$-compactness.   In  modern terminology
such covering properties are called \emph{selection principles} and constitute
 a rapidly growing area of general topology (see e.g.,  \cite{Tsa07}).

Trying to describe  $\sigma$-compactness in terms of open covers, Hurewicz
 \cite{Hur25} introduced
  the following property, nowadays called \emph{ the Menger property}, which was
historically the first selection principle: a topological space
$X$ is said to have this  property if for every sequence $\la \U_n : n\in\omega\ra$
of open covers of $X$ there exists a sequence $\la \V_n : n\in\omega \ra$ such that
each $\V_n$ is a finite subfamily of $\U_n$ and the collection $\{\cup \V_n:n\in\omega\}$
is a cover of $X$. The current name (the Menger property) is used because Hurewicz
proved in   \cite{Hur25} that for metrizable spaces his property is equivalent to
one basis property considered by Menger in \cite{Men24}, see \cite{BabSch03_04} for more information
on combinatorial properties of bases.
If in the definition above we additionally require that $\{\cup\V_n:n\in\w\}$
is a \emph{$\gamma$-cover} of $X$
(this means that the set $\{n\in\w:x\not\in\cup\V_n\}$ is finite for each $x\in X$),
then we obtain the definition of the Hurewicz covering property introduced
in \cite{Hur27}. Contrary to a conjecture of Hurewicz
the class of  metrizable spaces having the Hurewicz property
 appeared  to be much wider than the class of $\sigma$-compact spaces \cite[Theorem~5.1]{COC2}
(see also \cite{BarTsa06,Tsa07}).

By  \cite[Theorem~23]{AurTal12}
and \cite[Theorem~18]{Tal11_tappl}, every  productively Lindel\"of space has the
Hurewicz property if $\dd=\w_1$ or $\add(\M)=\mathfrak c$.
The following theorem implies both of these results.

\begin{theorem}\label{main2}
If $\add(\M)=\dd$, then every productively Lindel\"of space has the Hurewicz
property.
\end{theorem}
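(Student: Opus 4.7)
The plan is to proceed by contradiction: given a productively Lindel\"of $X$ that fails Hurewicz, produce a Lindel\"of $L$ with $X\times L$ not Lindel\"of. By the classical characterization essentially due to Hurewicz, a Lindel\"of space has the Hurewicz property iff every continuous image of it in $\w^\w$ is bounded (equivalently, $\sigma$-compact). Fix therefore a continuous $f\colon X\to\w^\w$ with unbounded image $Y$; since for any Lindel\"of $Z$ the product $Y\times Z$ is a continuous image of the Lindel\"of $X\times Z$, the space $Y$ is itself productively Lindel\"of, so one may replace $X$ by $Y$ and assume $X\sbst\w^\w$ is unbounded.

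Next I would extract the set-theoretic content. By the Fremlin--Miller--Truss identity $\add(\M)=\min(\bb,\cov(\M))$ together with $\bb,\cov(\M)\le\dd$, the hypothesis $\add(\M)=\dd$ forces $\bb=\cov(\M)=\dd$; in particular, after thinning, $X$ carries a $\leq^*$-increasing scale of length $\dd$, and $\w^\w$ cannot be covered by fewer than $\dd$ nowhere dense (in particular, $\sigma$-compact) subsets. For the Lindel\"of companion, take the one-point Lindel\"ofication $L=\w^\w\cup\{\infty\}$, whose opens are the usual opens of $\w^\w$ together with the sets $\{\infty\}\cup(\w^\w\setminus K_g)$ for $g\in\w^\w$, where $K_g:=\prod_n[0,g(n)]$. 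Every neighbourhood of $\infty$ has compact complement, so $L$ is Lindel\"of, and by productive Lindel\"ofness so is $X\times L$.

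The crux, and the main obstacle I anticipate, is to exhibit an open cover of $X\times L$ admitting no countable subcover. The natural candidate combines \emph{diagonal} opens $D_s:=([s]\cap X)\times[s]$ for $s\in\w^{<\w}$, which encode the embedding $X\hookrightarrow\w^\w$, with \emph{horizon} opens $H_g:=X\times\bigl(\{\infty\}\cup(\w^\w\setminus K_g)\bigr)$ for $g\in\w^\w$. A countable subcover $\{D_{s_n}\}\cup\{H_{g_n}\}$ would on the one hand compress $X$ into the $\sigma$-compact set $\bigcup_n K_{g_n}$ (contradicting unboundedness of $X$ via $\bb=\dd$) and on the other hand trap the diagonal $\{(x,x):x\in X\}$ in countably many basic clopen rectangles $[s_n]\times[s_n]$, which $\cov(\M)=\dd$ rules out by a Baire category argument. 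The delicate part is playing these two obstructions against the \emph{same} countable subcollection, which is precisely where the conjunction $\bb=\cov(\M)=\dd$---equivalent to $\add(\M)=\dd$---is needed, rather than either equality alone; this combinatorial balancing, and verifying the cover is truly without a countable subcover, is the step I expect to require the most care.
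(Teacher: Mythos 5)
Your reduction to an unbounded $X\sbst\w^\w$ and your identification of the cardinal arithmetic ($\add(\M)=\min(\bb,\cov(\M))$, hence $\bb=\cov(\M)=\dd$) are in line with the paper, but the crux of your argument --- the Lindel\"of companion $L$ and the cover of $X\times L$ --- does not work. Your $L$ is $\w^\w$ with one extra point $\infty$ whose neighbourhoods are co-compact while all other points keep their usual neighbourhoods; such an $L$ is Lindel\"of for trivial reasons and encodes no combinatorics. Correspondingly, your cover admits a \emph{finite} subcover: $D_{\emptyset}=([\emptyset]\cap X)\times[\emptyset]=X\times\w^\w$ already covers everything outside $X\times\{\infty\}$, and any single $H_g$ contains $X\times\{\infty\}$. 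Even if you restrict to $s$ of positive length, $\{D_s:s\in\w^{<\w}\}$ is countable, so it can never be the source of an uncountable irredundant subfamily, and the promised Baire-category obstruction has nothing to act on. In Michael--Moore type constructions the companion space must (nearly) isolate the points coming from the unbounded set, and then \emph{Lindel\"ofness of the companion} is the hard part --- that is where the set-theoretic hypothesis is spent, not in defeating countable subcovers of a product that is Lindel\"of for cheap reasons.

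The paper instead invokes a theorem of Moore (Theorem~\ref{moo}): if $X$ admits an increasing filtration $\la X_\alpha:\alpha<\kappa\ra$ with $\cf(\kappa)>\w$ such that every compact $K\sbst X$ lies in some $X_\alpha$ and the \emph{least} such $\alpha$ always has countable cofinality, then $X$ is not productively Lindel\"of. One takes a dominating family $\{d_\xi:\xi<\dd\}$ and sets $X_\alpha=\{x\in X: x\le^* d_\xi$ for some $\xi<\alpha\}$; unboundedness of $X$ gives $X_\alpha\neq X$, domination gives $K\sbst X_\alpha$ for some $\alpha$, and the countable-cofinality clause is obtained by writing $X_\alpha$ as a union of fewer than $\cov(\M)$ closed sets and running an Alster-style category argument to extract a countable subfamily covering $K$. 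That step is the only place $\cov(\M)=\dd$ enters, and it has no analogue in your sketch. Two secondary points: ``after thinning, $X$ carries a scale of length $\dd$'' is both unjustified and dangerous, since passing to a subspace can destroy productive Lindel\"ofness (the paper only needs $X$ unbounded, filtered by an \emph{external} dominating family); and your reduction via continuous maps into $\w^\w$ presupposes separation axioms, whereas the paper uses compact-valued upper semicontinuous maps precisely to treat arbitrary spaces.
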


If $\bb=\w_1$, then by \cite[Corollary~4.5]{AlaAurJunTal11}
every  productively Lindel\"of space has the Menger
property. The following result shows that
with a help of an  additional combinatorial assumption about
filters on $\w$ we can actually infer the Hurewicz property.

\begin{theorem} \label{main1}
If $\bb=\w_1$ and the Filter Dichotomy holds, then every productively Lindel\"of
space has the Hurewicz property.
\end{theorem}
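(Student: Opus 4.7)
The strategy is to promote the Menger property---which $X$ inherits from productive Lindel\"ofness under $\bb=\w_1$ by the cited theorem of Alas, Aurichi, Junqueira and Tall---to the Hurewicz property, using the Filter Dichotomy to close the remaining gap. Recall that for a Lindel\"of space $X$, the Hurewicz property is equivalent to every continuous image $f(X)\sbst\w^\w$ being bounded in the $\leq^*$ order, while the Menger property is preserved by continuous images. So I would assume toward contradiction that $X$ is Menger but not Hurewicz and fix a continuous $f:X\to\w^\w$ with $f(X)$ unbounded; the goal becomes to exhibit a countable (hence Lindel\"of) space $Y$ for which $X\times Y$ fails to be Lindel\"of, contradicting productive Lindel\"ofness.

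Using $\bb=\w_1$, I extract from $f(X)$ a $\leq^*$-unbounded $\w_1$-sequence $\{h_\alpha:\alpha<\w_1\}$, each $h_\alpha$ chosen recursively so as not to be majorized by any bound for the earlier ones. Attach to this family a filter $\F$ on $\w$ recording the ``common-boundedness pattern'', for example by taking the filter whose dual ideal consists of those $I\sbst\w$ for which the restricted family $\{h_\alpha\uhr I:\alpha<\w_1\}$ is bounded in $\w^I$ (or a closely related variant tailored to encode the failure of Hurewicz). The Filter Dichotomy applied to $\F$ splits into two cases: either $\F$ is meager, or $\F$ is the image of some ultrafilter $\U$ on $\w$ under a finite-to-one surjection $\pi:\w\to\w$.

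In each case one constructs the auxiliary space $Y$ together with an open cover of $X\times Y$ admitting no countable subcover. The ultrafilter branch uses $Y=\w\cup\{*\}$ with neighborhood base $\{\{*\}\cup U:U\in\U\}$ at $*$; the cover is assembled from clopen ``horizontal'' pieces derived from $f$ together with neighborhoods of $X\times\{*\}$, and the ultrafilter structure of $\U$, pulled back through $\pi$, rules out any countable subselection. The meager branch invokes Talagrand's characterization to obtain an interval partition $(I_k)_{k\in\w}$ of $\w$ such that every $F\in\F$ fails to contain $I_k$ for infinitely many $k$, and $Y$ is built by gluing along this partition. The main obstacle is the meager branch: one must weave the Talagrand partition together with the $\w_1$-scale $\{h_\alpha\}$ and with the Menger (non-dominating) property of $f(X)$ into a single cover that defeats every countable subcover, and it is here that the combined force of $\bb=\w_1$ and FD really comes to bear. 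The ultrafilter branch is comparatively direct, as the closure properties of one ultrafilter automate the combinatorial bookkeeping once the cover is correctly set up.
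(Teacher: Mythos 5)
Your plan has a fatal flaw at its core: you propose to contradict productive Lindel\"ofness by exhibiting a \emph{countable} space $Y$ (e.g.\ $Y=\w\cup\{*\}$ topologized by an ultrafilter) such that $X\times Y$ is not Lindel\"of. But for countable $Y$ the product $X\times Y$ is the union of the countably many subspaces $X\times\{y\}$, each homeomorphic to the Lindel\"of space $X$, and a countable union of Lindel\"of subspaces is always Lindel\"of. So no countable $Y$ can ever witness a failure of productive Lindel\"ofness, and both branches of your case split lose their target. There are two further structural problems. First, your filter $\F$ (dual to the ideal of sets on which the scale $\{h_\alpha\}$ is bounded) is defined purely combinatorially from an unbounded family, so nothing ties it back to the topology of $X$, and you never get to use the hypothesis of productive Lindel\"ofness against it. Second, you have the meager branch backwards: in the correct argument the meager case is the \emph{good} case, in which Talagrand's interval characterization directly produces the finite selections forming a $\gamma$-cover, i.e.\ the Hurewicz property holds outright; there is no contradiction to be extracted there, since the relevant filter genuinely is meager when $X$ is Hurewicz.

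For comparison, the paper's route is as follows. Given a sequence of open covers, Menger-ness of all finite powers of $X$ (which follows from $\bb=\w_1$ via \cite{vDo84} and \cite{RepZdo12}) yields, by \cite[Theorem~3.9]{COC2}, an $\w$-cover $\{U^n_{k_n}:n\in\w\}$ with one element selected from each cover. The filter $\G$ generated by the sets $I(x)=\{n:x\in U^n_{k_n}\}$, $x\in X$, is a countable union of compact-valued upper semicontinuous images of finite powers of $X$, hence is itself productively Lindel\"of --- this is exactly the link between the combinatorial object and the hypothesis that your construction lacks. If $\G$ is meager, Talagrand's theorem gives the $\gamma$-cover and we are done. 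If $\G$ is non-meager, FD together with $\ttt=\bb=\w_1$ yields a monotone surjection $\phi$ such that $\phi(\G)$ contains an unbounded tower of length $\w_1$, i.e.\ a $\bb$-scale sits inside a productively Lindel\"of subspace of $\w^\w$, contradicting \cite[Corollary~2.5]{AlaAurJunTal11}. The uncountable auxiliary Lindel\"of space that defeats the product lives inside that cited result (a Michael-space-type construction built from the scale), not in a countable ultrafilter space.
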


The \emph{Filter Dichotomy} is the statement that for any non-meager
filters $\F,\G$ on $\w$ there exists a monotone surjection
$\phi:\w\to\w$ such that $\phi(\F)=\phi(\G)$.
Here we consider  filters on $\w$
  with the topology inherited from $\mathcal P(\w)$,
the latter being  identified with the Cantor space $2^\w$ via  characteristic functions.

By \cite[Theorems~1,2]{BlaLaf89}, the Filter Dichotomy (abbreviated as FD in the sequel)
holds in the Miller model, and hence the premises  of Theorem~\ref{main1}
do not imply those of Theorem~\ref{main2},  for the
values of cardinal characteristics in some standard iteration models see \cite[p.~480]{Bla10}.

It has been noted in \cite{Tal11_qagt} that
 the three progressively weaker hypotheses: \emph{CH, $\mathfrak d=\w_1$,} and
\emph{$\w^\w$ is not productively Lindel\"of}, imply the
respectively weaker conclusions about metrizable productively
Lindel\"of spaces: \emph{$\sigma$-compact, the Hurewicz property,}
and \emph{the Menger property}. In \cite[Problem 3.13]{Tal11_qagt}
it is asked whether  the stronger hypotheses are necessary in order
to obtain the stronger conclusions. Theorems~\ref{main2} and
\ref{main1} may be thought of as a tiny step towards the solution of
this problem.

In these results we do not assume that $X$ satisfies any separation axioms.
This generality is achieved with the help of set-valued maps,
which by the methods developed in \cite{Zdo05} lead to a reduction
to subspaces of the Baire space.
For the definitions of cardinal characteristics  used in
this paper we refer the reader to \cite{Bla10} or \cite{Vau90}.

\section{Proofs}

 Theorem~\ref{main2} will be proved by adding ``an $\e$'' to the
following deep result:

\begin{theorem}\label{moo}
 Let $X$ be a topological space which admits a compactification whose remainder
is Lindel\"of. If there exists a cardinal $\kappa$ of uncountable cofinality
and an  increasing sequence $\la X_\alpha:\alpha<\kappa\ra$ of principal subsets of
$X$ such that $\bigcup_{\alpha<\kappa}X_\alpha=X$,  for every compact $K\subset X$
there exists an ordinal $\alpha$ such that $K\subset X_\alpha$,
and the minimal ordinal with this property has  countable cofinality,
 then $X$ is not productively Lindel\"of.
\end{theorem}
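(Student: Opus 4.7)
\emph{Plan.} The strategy is to exhibit, for some Lindel\"of space $Y$, an open cover of $X \times Y$ with no countable subcover. The natural choice is $Y := R$, where $R := bX \setminus X$ is the Lindel\"of remainder of a compactification $bX$ of $X$ granted by the hypothesis.

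For each $\alpha < \kappa$, I would read the hypothesis that $X_\alpha$ is \emph{principal} as providing that the closure $K_\alpha := \overline{X_\alpha}^{bX}$ is a compact subset of $X$, in particular disjoint from $R$. Normality of the compact Hausdorff space $bX$ then yields disjoint open sets $O_\alpha, P_\alpha$ in $bX$ with $K_\alpha \sbst O_\alpha$, $R \sbst P_\alpha$, and $\overline{O_\alpha}^{bX} \cap R = \emptyset$; in particular $\overline{O_\alpha}^{bX}$ is itself a compact subset of $X$. Setting $W_\alpha := (O_\alpha \cap X) \times (P_\alpha \cap R)$, the open family $\{W_\alpha : \alpha < \kappa\}$ covers $X \times R$, because given $(x,y) \in X \times R$ one can select $\alpha$ with $x \in X_\alpha \sbst O_\alpha$, and $y \in R \sbst P_\alpha$ is automatic.

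To rule out a countable subcover $\{W_{\alpha_n} : n \in \w\}$, I would invoke the cofinality hypotheses as follows. Each $\overline{O_{\alpha_n}}^{bX}$ is a compact subset of $X$, so by hypothesis there is a minimum ordinal $\gamma_n$ with $\overline{O_{\alpha_n}}^{bX} \sbst X_{\gamma_n}$; the countable-cofinality clause ensures $\cf(\gamma_n) \leq \w$, and in particular $\gamma_n < \kappa$. Since $\cf(\kappa) > \w$, the countable supremum $\beta := \sup_n \gamma_n$ stays strictly below $\kappa$. After passing, if necessary, to a strictly increasing cofinal subsequence of the chain (which preserves all hypotheses), pick $x \in X \setminus X_\beta$ and any $y \in R$. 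Then $x \notin X_{\gamma_n} \spst \overline{O_{\alpha_n}}^{bX} \cap X \spst O_{\alpha_n} \cap X$ for every $n$, so $(x,y)$ is covered by none of the $W_{\alpha_n}$, contradicting the assumption.

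The main obstacle is the first step: correctly using the \emph{principal} hypothesis to justify that $\overline{X_\alpha}^{bX} \sbst X$, which is the structural ingredient that powers the normality-based separation of $K_\alpha$ from $R$ and also guarantees that $\overline{O_{\alpha_n}}^{bX}$ is a compact subset of $X$ to which the exhaustion property applies. Once that is secured, the remaining argument is pure cofinality arithmetic, using that countably many ordinals below a $\kappa$ of uncountable cofinality have supremum still below~$\kappa$.
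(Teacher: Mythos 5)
The paper does not actually prove Theorem~\ref{moo}; it is obtained by repeating part of the proof of Moore's Theorem~1.2, which is a Michael-space construction. Your proposal takes a different route, and it cannot work. The decisive structural problem is your choice of test space: you try to witness non-productivity with $Y=R=bX\setminus X$ carrying the subspace topology. But in the only situation where this theorem is used in the paper, $X$ is a subspace of $\w^\w$ and $bX$ may be taken metrizable, so $X$ and $R$ are both second countable; then $X\times R$ is second countable and hence Lindel\"of, so \emph{no} open cover of $X\times R$ can lack a countable subcover. Any correct argument must manufacture a \emph{new} Lindel\"of space out of $bX$ --- the classical move is to isolate (suitably chosen) points of $X$ inside $bX$ while keeping the $bX$-neighbourhoods of the points of $R$; the filtration $\la X_\alpha:\alpha<\kappa\ra$, the uncountable cofinality of $\kappa$, and the countable cofinality of the minimal absorbing ordinals are exactly what is needed to show that this modified space is Lindel\"of, and the failure of productivity then comes from an uncountable closed discrete ``diagonal'' in the product, not from a bare open cover of $X\times R$.

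Two individual steps also fail. First, the normality separation of $K_\alpha$ from $R$ requires $K_\alpha$ and $R$ to be disjoint \emph{closed} subsets of $bX$; $R$ is closed in $bX$ only when $X$ is open in $bX$, i.e.\ when $X$ is locally compact. For $X=\w^\w$ (nowhere locally compact) the remainder $R$ is dense in $bX$, so no nonempty open $O_\alpha$ can satisfy $\overline{O_\alpha}^{bX}\cap R=\emptyset$, and your cover $\{W_\alpha\}$ cannot even be defined. Second, your reading of ``principal'' as ``$\overline{X_\alpha}^{bX}$ is a compact subset of $X$'' is not Moore's notion and is incompatible with the application: there $X_\alpha=\{x\in X: x\le^* d_\xi \mbox{ for some } \xi<\alpha\}$ is a union of closed but non-compact subsets of $X$ and is in general not contained in any compact subset of $\w^\w$, so its closure in $bX$ meets the remainder. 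Under your reading the hypotheses would force $X$ to be a strictly increasing union of $\kappa$ compact sets with each compact subset absorbed at some stage, which (for Lindel\"of $X$) quickly collapses to $X$ being compact --- so the theorem would become vacuous exactly where it is needed. The cofinality bookkeeping in your last paragraph is fine, but it is resting on a foundation that is not available.
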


Theorem~\ref{moo} can be proved by almost verbatim
repetition of a part of the proof
of \cite[Theorem~1.2]{Moo99}.
\medskip

\noindent\textit{Proof of Theorem~\ref{main2}.} \
Given  a productively Lindel\"of space $X$, we shall show that it has
the Hurewicz property.

 Combining \cite[Lemma~1 and Theorem~2]{Zdo05}
we conclude that $X$ has the  Hurewicz property if and only if all images of $X$
under compact-valued  upper semicontinuous maps $\Phi:X\Rightarrow\w^\w$ have it.
Since any such image of $X$ is productively Lindel\"of, it is enough to show
that productively Lindel\"of subspaces of $\w^\w$ have the Hurewicz property.
Therefore we shall  assume that $X\subset\w^\w$.
 Suppose to the contrary that  $X$ does not have the Hurewicz property.
Using  \cite[Theorem~4.3]{COC2} and passing to a homeomorphic copy of $X$, if necessary,
we may additionally assume that $X$ is unbounded with respect to $\leq^*$. The proof
will be completed as soon as we derive a contradiction
with  $\add(\M)=\dd$.

Let $D=\{d_\alpha:\alpha<\dd\}$ be a dominating family.
Since $\add(\M)\leq\bb\leq\dd$ \cite{Vau90},
we conclude that $\bb=\dd$. For every $\alpha<\bb $ set $X_\alpha=\{x\in X:x\leq^*d_\xi$
for some $\xi<\alpha\}$. Since $X$ is unbounded and $D$ is dominating,
 $X_\alpha\neq X$ for all $\alpha$ and $\bigcup_{\alpha<\dd}X_\alpha=X$.
Given a compact $K\subset X$ we can find $\alpha<\dd$ such that all elements of
$K$ are bounded by $f_\alpha$, and hence  $K\subset X_\alpha$.
Observe that each $X_\alpha$ is a union of a family $\V_\alpha$
of fewer than $\cov(\M)$ many
closed subsets of $X$, where
$$ \V_\alpha=\big\{\{x\in X:x(k)\leq d_\xi(k)\mbox{ for all }k\geq n\}: \xi<\alpha,n\in\w\big\}. $$
By the same argument as in the proof of \cite[Lemma~2]{Als90} we can show that
there exists a countable subfamily $\V'$ of $\V_\alpha$ covering $K$.
It follows from the above that the minimal ordinal $\beta$ such that $K\subset X_\beta$
has countably cofinality. Therefore the sequence $\la X_\alpha:\alpha<\dd\ra$ satisfies
the premises of Theorem~\ref{moo}, and hence $X$ is not productively Lindel\"of.
\hfill $\Box$
\medskip

 Theorem~\ref{main1} is a direct consequence of
Theorem~\ref{main1_tech} below, where the FD is weakened to the following
assumption:
\begin{itemize}
 \item[$(*)$] For every non-meager filter $\G$ there exists an unbounded tower
$\T$ of cardinality $\bb$ and  a monotone surjection $\phi:\w\to\w$ such that
$\phi(\T)\subset \phi(\G)$.
\end{itemize}

We recall that a \emph{tower of cardinality $\kappa$}  is a set
$\T \subset [\w]^\w$  which can be enumerated
 as $\{T_\alpha:\alpha<\kappa \}$, such that for all
$\alpha<\beta<\kappa$, $T_\beta\subset^* T_\alpha$ and $T_\alpha\not\subset^* T_\beta$,
where $A\subset^* B$ means $|A\setminus B|<\w$.
An \emph{unbounded tower of cardinality $\kappa$} is an unbounded with respect to $\leq^*$ set $\T\subset [\w]^\w$ which is a
tower of cardinality $\kappa$ (here we identify each element of $[\w]^\w$ with its increasing
enumeration). It is an easy exercise to show that
$\ttt=\bb$ if  and only if  there is an unbounded tower of
cardinality $\ttt$.

We shall use the following fundamental result of Talagrand \cite{Tal82}.

\begin{theorem} \label{tal}
A filter $\F$ is meager if  and only if there exists an increasing sequence $\la n_k:k\in\w\ra$
of natural numbers such that each  $F\in\F$ meets  all but finitely many
intervals $[n_k,n_{k+1})$.
\end{theorem}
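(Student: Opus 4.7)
The plan is to treat the two implications separately. For $(\Leftarrow)$, I would argue directly that if such a sequence $\la n_k:k\in\w\ra$ exists, then $\F\sbst\bigcup_N M_N$, where $M_N=\{A\sbst\w : A\cap[n_k,n_{k+1})\neq\emptyset \text{ for every } k\geq N\}$. Each $M_N$ is closed (an intersection of clopen conditions) and nowhere dense, since every $s\in 2^{<\w}$ extends by placing zeros on some interval $[n_k,n_{k+1})$ with $n_k>|s|$ and $k\geq N$, yielding a sub-cylinder disjoint from $M_N$. Thus $\F$ is meager.

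For $(\Rightarrow)$ I would first establish the combinatorial lemma that every meager $M\sbst 2^\w$ is contained in a set of the form $\{x\in 2^\w : x\uhr[n_k,n_{k+1})=y\uhr[n_k,n_{k+1})\text{ for only finitely many }k\}$ for some $y\in 2^\w$ and some increasing $\la n_k:k\in\w\ra$. Writing $M\sbst\bigcup_k C_k$ with $C_k$ closed nowhere dense and increasing, I build $(n_k)$ and $y\uhr[n_k,n_{k+1})$ by induction. At stage $k$, enumerate $2^{n_k}$ as $s^1,\dots,s^L$ and construct a common suffix $v$ by processing the $s^i$ one at a time: having arranged $[s^j\cdot v]\cap C_k=\emptyset$ for all $j<i$, use the nowhere density of $C_k$ to extend $v$ to $v\cdot u$ with $[s^i\cdot v\cdot u]\cap C_k=\emptyset$; the conditions for $j<i$ persist because the cylinders only shrink. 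Taking $y\uhr[n_k,n_{k+1})$ to be the final $v$ then guarantees $[s\cdot y\uhr[n_k,n_{k+1})]\cap C_k=\emptyset$ for every $s\in 2^{n_k}$. For $x\in M$ with $x\in C_N$, any $k\geq N$ at which $x$ agrees with $y$ on $[n_k,n_{k+1})$ would place $x$ inside such a cylinder, contradicting $x\in C_k\spst C_N$, so agreement can occur only finitely often.

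With the lemma applied to $\F$, I obtain $y$ and $\la n_k:k\in\w\ra$. For $F\in\F$ the superset $F':=F\cup y^{-1}(1)$ lies in $\F$, so its characteristic function disagrees with $y$ on $[n_k,n_{k+1})$ for cofinitely many $k$; since it dominates $y$ pointwise, every such disagreement must occur at a coordinate $i$ with $y(i)=0$ and $i\in F$, giving $F\cap y^{-1}(0)\cap[n_k,n_{k+1})\neq\emptyset$ for cofinitely many $k$. Specializing to $F=\w\in\F$ shows $y^{-1}(0)\cap[n_k,n_{k+1})\neq\emptyset$ for cofinitely many $k$, so after discarding finitely many bad indices I may assume this for all $k$. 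Setting $a_k=\min(y^{-1}(0)\cap[n_k,n_{k+1}))$, $m_0=0$, and $m_{k+1}=a_k$ yields an increasing sequence with $[m_k,m_{k+1})\spst y^{-1}(0)\cap[n_{k-1},n_k)$ for each $k\geq 1$, so every $F\in\F$ meets all but finitely many $[m_k,m_{k+1})$. The main obstacle is the inductive construction inside the lemma, where a single suffix $v$ must simultaneously push every extension $s\cdot v$ outside $C_k$; the sequential-processing trick, which exploits the fact that refining a cylinder preserves disjointness from a closed set, is what makes this possible.
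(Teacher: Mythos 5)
Your proof is correct and complete. Note first that the paper itself does not prove this statement: Theorem~\ref{tal} is quoted as a known result with a citation to Talagrand's 1982 paper, so there is no in-paper argument to compare against. Your two directions are both sound: the $(\Leftarrow)$ direction correctly exhibits $\F$ as a countable union of the closed nowhere dense sets $M_N$, and the $(\Rightarrow)$ direction goes through the standard combinatorial characterization of meager subsets of $2^\w$ (every meager set is contained in the set of reals matching a fixed $y$ on only finitely many blocks of a fixed interval partition), whose inductive construction --- processing all of $2^{n_k}$ sequentially and using that disjointness of a cylinder from a closed set persists under refinement --- you carry out correctly. The passage from ``$\chi_F$ disagrees with $y$ on cofinitely many blocks'' to ``$F$ meets cofinitely many intervals of a single re-chopped partition'' is the one place where care is needed, and your device of replacing $F$ by $F\cup y^{-1}(1)$ to force all disagreements onto $y^{-1}(0)$, followed by re-partitioning around the points $a_k=\min\bigl(y^{-1}(0)\cap[n_k,n_{k+1})\bigr)$, handles it correctly (the case $F=\w$ guaranteeing that $y^{-1}(0)$ meets cofinitely many blocks). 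This is essentially the classical Talagrand/Bartoszy\'nski argument, so while the paper offers no proof, your route is the expected one rather than a genuinely novel alternative.
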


The following lemma implies that Theorem~\ref{main1_tech} is indeed an improvement
of Theorem~\ref{main1}.

\begin{lemma} \label{l2}
If $\bb=\ttt$ and   the FD  holds, then $(*)$ holds as well.
\end{lemma}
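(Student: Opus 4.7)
The plan is to deduce $(*)$ directly from FD, using the hypothesis $\bb=\ttt$ to manufacture a non-meager filter to feed into FD. First I fix an unbounded tower $\T=\{T_\alpha:\alpha<\bb\}$ (which exists by $\bb=\ttt$) and consider the filter $\F_\T:=\{A\subset\w:T_\alpha\subset^* A\text{ for some }\alpha<\bb\}$ generated by it; this is a proper filter because the $T_\alpha$'s are infinite and $\subset^*$-decreasing. Assuming for the moment that $\F_\T$ is non-meager, FD applied to $\F_\T$ and the given non-meager $\G$ produces a monotone surjection $\phi:\w\to\w$ with $\phi(\F_\T)=\phi(\G)$. Since $T_\alpha\in\F_\T$ for every $\alpha$, this immediately yields $\phi(\T)\subset\phi(\F_\T)=\phi(\G)$, which is exactly $(*)$.

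The heart of the argument is thus to verify that $\F_\T$ is non-meager, which I would do via Talagrand's Theorem~\ref{tal}. Suppose, towards a contradiction, that $\F_\T$ is meager, witnessed by an increasing sequence $\la n_k:k\in\w\ra$ such that every element of $\F_\T$ meets $[n_k,n_{k+1})$ for all but finitely many $k$. For each $\alpha<\bb$, since $T_\alpha\in\F_\T$, there is $K_\alpha\in\w$ with $T_\alpha\cap[n_k,n_{k+1})\neq\emptyset$ for all $k\geq K_\alpha$. Identifying $T_\alpha$ with its increasing enumeration, an easy counting argument (using the points of $T_\alpha$ contributed by these intervals) then gives $T_\alpha(j)<n_{j+K_\alpha+1}$ for all sufficiently large $j$, i.e., $T_\alpha\leq^* f_{K_\alpha}$, where $f_K(j):=n_{j+K+1}$.

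Consequently $\T$ is $\sigma$-bounded by the countable family $\{f_K:K\in\w\}$; dominating this countable family by a single $g\in\w^\w$ shows that $\T$ itself is $\leq^*$-bounded by $g$, which contradicts the unboundedness of the tower. Hence $\F_\T$ is non-meager and the plan above goes through. The only real obstacle is this Talagrand step: one must translate the ``meets cofinitely many intervals $[n_k,n_{k+1})$'' condition into the concrete domination $T_\alpha\leq^* f_{K_\alpha}$, and then exploit the elementary fact that $\sigma$-boundedness collapses to boundedness in $(\w^\w,\leq^*)$. Everything else is just bookkeeping around the statement of FD.
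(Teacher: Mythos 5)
Your proof is correct and takes essentially the same route as the paper: apply FD to the filter $\la\T\ra$ generated by an unbounded tower of cardinality $\bb$ and to the given non-meager filter, then observe that $\phi(\T)\subset\phi(\la\T\ra)=\phi(\G)$. The only difference is that the paper leaves the non-meagerness of $\la\T\ra$ implicit, whereas you verify it explicitly via Talagrand's characterization and the fact that a countable union of $\leq^*$-bounded sets is bounded; that verification is the standard argument and is carried out correctly.
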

\begin{proof}
Let $\T$ be an unbounded tower of cardinality $\bb$ and $\F$ be a non-meager filter.
The FD yields a monotone surjection $\phi:\w\to\w$ such that
$\phi(\la \T\ra)=\phi(\F)$. Then $\phi(\T)\subset\phi(\F)$ and $\phi(\T)$ is an unbounded tower of cardinality $\bb$.
\end{proof}

A set $S=\{f_\alpha : \alpha<\bb\}$ is called  a \emph{$\bb$-scale} if $S\subset\w^\w$,
all elements of $S$ are increasing,
$S$ is unbounded with respect to $\leq^*$, and  $f_\alpha\leq^*  f_\beta$ for each $\alpha<\beta<\bb$.
It is easy to see that a $\bb$-scale always exists.

 Let $\chi:[\w]^\w\to \w^\w$ be the map assigning to each infinite subset $A$ of $\w$ its enumeration $e_A\in\w^\w$
(i.e., $e_A(n)$ is the $n$th element of $A$). Then $\chi $ is an embedding of $[\w]^\w$ into $\w^\w$
which maps every unbounded tower of cardinality $\bb$ onto a $\bb$-scale.
It is a  direct consequence of
\cite[Corollary~2.5]{AlaAurJunTal11} that if $\bb=\w_1$ and $X$ is a productively Lindel\"of subspace of $\w^\w$, then
$B\not\subset X$ for any $\bb$-scale $B$. As a corollary we get the following

\begin{lemma} \label{l3}
If $\bb=\w_1$ and $X$ is a productively Lindel\"of subspace of $[\w]^\w$, then
$\T\not\subset X$ for any unbounded tower  $\T$.
\end{lemma}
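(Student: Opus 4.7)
The plan is to transfer $\T$ to $\w^\w$ via the enumeration embedding $\chi$ and invoke the cited \cite[Corollary~2.5]{AlaAurJunTal11}. Since $\chi:[\w]^\w\to\w^\w$ is a topological embedding, $\chi(X)$ is homeomorphic to $X$, hence a productively Lindel\"of subspace of $\w^\w$; moreover, $\chi$ turns the $\subset^*$-decreasing, $\leq^*$-unbounded tower $\T$ into a $\leq^*$-increasing, $\leq^*$-unbounded subset $\chi(\T)\subset\chi(X)$.

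Arguing by contradiction, assume $\T\subset X$. In the case $|\T|=\bb=\w_1$, the observation immediately preceding the lemma says that $\chi(\T)$ is itself a $\bb$-scale, so \cite[Corollary~2.5]{AlaAurJunTal11} applied to the productively Lindel\"of subspace $\chi(X)\subset\w^\w$ at once contradicts $\chi(\T)\subset\chi(X)$. For towers of larger cardinality, I would first extract an unbounded sub-tower of $\T$ of cardinality $\w_1$ and reduce to the previous case. Such a sub-tower is produced by transfinite recursion along $\w_1$: at stage $\xi$ the already chosen family $\{e_{T_{\alpha_\eta}}:\eta<\xi\}$ has cardinality below $\bb=\w_1$ and is therefore $\leq^*$-bounded by some $f_\xi$, and one picks $\alpha_\xi$ past all previous indices with $e_{T_{\alpha_\xi}}\not\leq^* f_\xi$, which is possible because $\chi(\T)$ is $\leq^*$-unbounded.

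The main obstacle is to arrange the stage-bounds $f_\xi$ compatibly enough that the resulting $\w_1$-subsequence is \emph{globally} $\leq^*$-unbounded, not just stage-by-stage; a standard diagonalisation/book-keeping argument available under $\bb=\w_1$ should handle this, after which \cite[Corollary~2.5]{AlaAurJunTal11} finishes the proof exactly as in the $|\T|=\w_1$ case.
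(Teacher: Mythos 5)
Your first paragraph together with the case $|\T|=\bb=\w_1$ is exactly the paper's argument: the lemma is stated there ``as a corollary'' of the two facts you invoke, namely that $\chi$ is an embedding carrying every unbounded tower of cardinality $\bb$ onto a $\bb$-scale, and that by \cite[Corollary~2.5]{AlaAurJunTal11} no $\bb$-scale is contained in a productively Lindel\"of subspace of $\w^\w$ when $\bb=\w_1$. Note also that the paper only ever defines ``unbounded tower'' with a specified cardinality, and the lemma is applied (in case~2 of the proof of Theorem~\ref{main1_tech}) only to an unbounded tower of cardinality $\bb$; so this case is all that is actually needed.

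Your proposed reduction for towers of larger cardinality, however, does not merely have a gap to be filled by book-keeping --- it is impossible in the one case where it would be nontrivial. Write $\T=\{T_\alpha:\alpha<\kappa\}$. Since $T_\beta\subset^* T_\alpha$ and $T_\alpha\not\subset^* T_\beta$ force $e_{T_\alpha}<^* e_{T_\beta}$ for $\alpha<\beta$, the sequence $\la e_{T_\alpha}:\alpha<\kappa\ra$ is $\le^*$-increasing, and hence every proper initial segment $\{e_{T_\alpha}:\alpha\le\delta\}$ is bounded by the single function $e_{T_\delta}$. Consequently, if $\cf(\kappa)>\w_1$, then the index set of \emph{any} $\w_1$-sized subfamily lies below some $\delta<\kappa$ and the subfamily is $\le^*$-bounded; no choice of the stage-bounds $f_\xi$ can circumvent this, so an unbounded $\w_1$-subtower simply does not exist in this case. (If $\cf(\kappa)=\w_1$, no recursion is needed: any cofinal subsequence of order type $\w_1$ is an unbounded tower of cardinality $\w_1$, because a $\le^*$-bound for a cofinal subfamily of a $\le^*$-increasing sequence bounds the whole sequence.) The correct reading is that the lemma concerns unbounded towers of cardinality $\bb$ --- the only notion the paper defines --- and then your first case already constitutes the complete proof.
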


In the proof of Theorem~\ref{main1_tech} we shall use set-valued maps,
see \cite{RepSem98}.
By a \emph{set-valued map} $\Phi$ from a set $X$ into a set $Y$ we
understand a map from $X$ into the power-set $\mathcal P(Y)$ of $Y$
 and write $\Phi :
X\Rightarrow Y$. For a subset $A$ of $X$ we define $\Phi(A) =
\bigcup_{x\in A}\Phi(x) \subset Y$. A set-valued map $\Phi$ from a
topological spaces $X$ to a topological space $Y$ is said to be
\begin{itemize}
\item  \emph{compact-valued}, if $\Phi(x)$  is compact for every $x \in X$;
\item \emph{upper semicontinuous}, if for every open subset $V$ of $Y$ the
set
$\Phi^{-1}_\subset (V) = \{x \in  X : \Phi(x) \subset V\}$ is open in $X$.
\end{itemize}

A family $\F\subset [\w]^\w$ is called \emph{centered}, if $\cap\F_1\in [\w]^\w$ for every $\F_1\in [\F]^{<\w}$.
For a centered family $\F$ we shall denote by $\la\F\ra$ the smallest non-principal filter on $\w$
containing $\F$. In other words, $\la\F\ra=\{A\subset\w:\cap\F_1\subset^* A$ for some $\F_1\in [\F]^{<\w}\}$.

The following easy lemma is a direct consequence of \cite[Claim 3.2(2)]{RepZdo12}.

\begin{lemma} \label{l1}
If a centered family $\F$ is an image of a topological space $X$ under a  compact-valued
upper semicontinuous map, then $\la\F\ra$ is a countable union of
images of finite powers of  $X$ under   compact-valued
upper semicontinuous maps.
\end{lemma}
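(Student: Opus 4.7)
The plan is to write $\la\F\ra$ as a countable union $\bigcup_{n,k\in\w}\Psi_{n,k}(X^n)$, where each $\Psi_{n,k}\colon X^n\Rightarrow\PP(\w)$ is a compact-valued upper semicontinuous map manufactured from $\Phi$. The indexing reflects the standard description of the generated filter: a subset $A\subseteq\w$ belongs to $\la\F\ra$ iff there exist $n$, elements $F_1,\dots,F_n\in\F$, and $k\in\w$ with $(F_1\cap\cdots\cap F_n)\setminus k\subseteq A$.

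For fixed $n,k\in\w$ I would set
\[\Psi_{n,k}(x_1,\dots,x_n)=\mu_{n,k}\big(\Phi(x_1)\times\cdots\times\Phi(x_n)\times 2^\w\big),\]
where $\mu_{n,k}\colon(2^\w)^n\times 2^\w\to 2^\w$ is the continuous map sending $(F_1,\dots,F_n,B)$ to $((F_1\cap\cdots\cap F_n)\setminus k)\cup B$. Since $\Phi(x_i)$ and $2^\w$ are compact and $\mu_{n,k}$ is continuous, each value $\Psi_{n,k}(x_1,\dots,x_n)$ is compact. Moreover, taking $B=A$ shows that $A\in\Psi_{n,k}(x_1,\dots,x_n)$ iff $A\supseteq(F_1\cap\cdots\cap F_n)\setminus k$ for some $F_i\in\Phi(x_i)$; consequently $\bigcup_{n,k\in\w}\Psi_{n,k}(X^n)=\la\F\ra$ by the description above.

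The only step requiring real work is the upper semicontinuity of $\Psi_{n,k}$. Given an open $V\subseteq 2^\w$ with $\Psi_{n,k}(\bar x)\subseteq V$, the condition becomes $\Phi(x_1)\times\cdots\times\Phi(x_n)\times 2^\w\subseteq\mu_{n,k}^{-1}(V)$. Iterating the tube lemma on these compact factors yields open sets $W_1,\dots,W_n\subseteq 2^\w$ with $\Phi(x_i)\subseteq W_i$ and $W_1\times\cdots\times W_n\times 2^\w\subseteq\mu_{n,k}^{-1}(V)$; upper semicontinuity of $\Phi$ then provides open neighborhoods $U_i\ni x_i$ with $\Phi(U_i)\subseteq W_i$, so every $\bar y\in U_1\times\cdots\times U_n$ satisfies $\Psi_{n,k}(\bar y)\subseteq V$. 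This tube-lemma manipulation is the main (though entirely routine) obstacle, and it is essentially the computation already performed in \cite[Claim~3.2(2)]{RepZdo12}, which is why the lemma can be stated as a direct consequence of that reference.
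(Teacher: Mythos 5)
Your construction is correct: the maps $\Psi_{n,k}$ are compact-valued (continuous images of compact products), the tube-lemma argument for upper semicontinuity is sound, and the union over $n,k$ recovers exactly $\la\F\ra=\{A:(F_1\cap\dots\cap F_n)\setminus k\subseteq A$ for some $F_i\in\F$, $k\in\w\}$. The paper itself offers no proof beyond the citation of \cite[Claim~3.2(2)]{RepZdo12}, and your argument is precisely the standard construction underlying that reference, so this matches the intended proof.
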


Let $\U$ be a family of subsets of a set $X$. A subset $A$ of $X$ is called
\emph{$\U$-bounded} if $A\subset\cup\V$ for some finite $\V\subset\U$. $\U$ is
called an \emph{$\w$-cover} of $X$ if $X\not\in\U$ and for every finite $F\subset X$ there
exists $U\in\U$ such that $F\subset U$.

\begin{theorem}\label{main1_tech}
If $\bb=\w_1$ and the assumption $(*)$ from above holds, then every productively Lindel\"of
space has the Hurewicz property.
\end{theorem}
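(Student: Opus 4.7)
The plan is to parallel the structure of the proof of Theorem~\ref{main2}, replacing the appeal to Theorem~\ref{moo} with a pigeonhole argument combining Lemmas~\ref{l1} and~\ref{l3} with the hypothesis $(*)$. Assume for contradiction that $X$ is productively Lindel\"of and fails the Hurewicz property. Using the set-valued map characterization \cite[Lemma~1 and Theorem~2]{Zdo05} together with Talagrand's Theorem~\ref{tal}, we first reduce to a compact-valued upper semicontinuous image $Y$ of $X$ in $[\w]^\w$ which is a centered family such that the filter $\la Y\ra$ it generates is non-meager. Since productive Lindel\"ofness is preserved by compact-valued upper semicontinuous images, $Y$ is still productively Lindel\"of, and we henceforth assume that $X\subset[\w]^\w$ is itself a centered family whose generated filter $\G:=\la X\ra$ is non-meager.

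Apply assumption $(*)$ to $\G$ to obtain an unbounded tower $\T$ of cardinality $\bb=\w_1$ and a monotone surjection $\phi:\w\to\w$ with $\phi(\T)\subset\phi(\G)$; as in the proof of Lemma~\ref{l2}, $\phi(\T)$ is again an unbounded tower of cardinality $\w_1$. By Lemma~\ref{l1}, $\G=\bigcup_{n\in\w}Z_n$, where each $Z_n$ is the image of a finite power $X^{k_n}$ under a compact-valued upper semicontinuous map. Finite powers of productively Lindel\"of spaces are productively Lindel\"of (by iterating the definition), and compact-valued upper semicontinuous images inherit the property, so each $Z_n$ is productively Lindel\"of. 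The map $\hat\phi:[\w]^\w\to[\w]^\w$, $A\mapsto\phi[A]$, is continuous (because $\phi^{-1}(n)$ is a finite interval for every $n$, so the condition ``$n\in\phi[A]$'' is clopen in $A\in 2^\w$), whence $\phi(\G)=\bigcup_{n\in\w}\hat\phi(Z_n)$ is a countable union of productively Lindel\"of subspaces of $[\w]^\w$. Since $|\phi(\T)|=\w_1$ and $\phi(\T)\subset\phi(\G)$, the pigeonhole principle yields an $n_0$ such that $\hat\phi(Z_{n_0})$ contains uncountably many elements of $\phi(\T)$; any such subset is cofinal in $\phi(\T)$ and is therefore itself an unbounded tower, contradicting Lemma~\ref{l3}.

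The main obstacle is the initial reduction step: converting the abstract failure of the Hurewicz property on $X$ into the concrete statement that some compact-valued upper semicontinuous image of $X$ in $[\w]^\w$ is a centered family with non-meager generated filter. This requires a careful marriage of the set-valued machinery of \cite{Zdo05} with Talagrand's characterization of meager filters; one expects the ``bad'' sequence of open covers witnessing the failure of Hurewicz to be coded (via compact-valued upper semicontinuous maps) into a centered family in $[\w]^\w$ whose generated filter, by Talagrand, cannot be meager. Once this reduction is in hand, the remainder of the argument — a pigeonhole principle together with the preservation properties of productive Lindel\"ofness under finite powers, continuous images, and compact-valued upper semicontinuous images — is essentially routine.
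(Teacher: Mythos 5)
Your overall architecture is the same as the paper's: attach a filter on $\w$ to the space via compact-valued upper semicontinuous maps, split on whether that filter is meager using Talagrand's Theorem~\ref{tal}, and in the non-meager case combine $(*)$ with Lemma~\ref{l3} to reach a contradiction. Your endgame for the non-meager case is essentially correct and matches the paper (which applies Lemma~\ref{l3} directly to $\phi(\G)$, productively Lindel\"of as a countable union of continuous images of productively Lindel\"of spaces; your pigeonhole into a single piece $\hat\phi(Z_{n_0})$ is a harmless variant, since an uncountable subset of an unbounded tower of length $\w_1$ is again an unbounded tower). The problem is the step you yourself flag as ``the main obstacle'' and leave undone: it is not a routine marriage of known machinery, it is the heart of the proof, and it cannot be carried out as you describe without an ingredient you never invoke.

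Concretely, to attach a centered family of \emph{infinite} subsets of $\w$ to a given sequence $\la\U_n:n\in\w\ra$ of open covers, the paper first needs every finite power of $X$ to have the Menger property. This comes from $\bb=\w_1$ alone: then $\w^\w$ is not productively Lindel\"of, so by \cite[Proposition~3.1]{RepZdo12} every productively Lindel\"of space is Menger, and finite powers of $X$ are again productively Lindel\"of. Menger in all finite powers yields, via \cite[Theorem~3.9]{COC2}, finite selections $\V_n\in[\U_n]^{<\w}$ (after normalization, singletons $U^n_{k_n}$) whose union is an $\w$-cover of $X$; only then are the sets $I_\V(x)=\{n\in\w: x\in U^n_{k_n}\}$ infinite, and only then does \cite[Lemma~2]{Zdo05} together with Lemma~\ref{l1} produce the filter $\G\subset[\w]^\w$ as a countable union of compact-valued upper semicontinuous images of finite powers of $X$. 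Moreover, the meager/non-meager dichotomy must be run \emph{per cover sequence}: when $\G$ is meager, one does not ``reduce away'' this case but rather uses Talagrand's interval sequence $\la m_n\ra$ to exhibit the $\gamma$-cover $\la\bigcup\{U^m_{k_m}:m\in[m_n,m_{n+1})\}:n\in\w\ra$, which is what makes your assertion that the generated filter ``cannot be meager'' true --- and this argument depends on the filter being tied to the covers through $I_\V$, not on $\G$ being an abstract non-meager image of $X$. Since your write-up contains neither the Menger/$\w$-cover step nor the mechanism by which meagerness of the filter produces a $\gamma$-cover, the reduction you defer is a genuine gap rather than a routine verification.
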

\begin{proof}
Let $X$ be a productively Lindel\"of space and let $\la \U_n:n\in\w
\ra$ be a sequence of open covers of $X$. Let us write $\U_n$ in the
form $\{U^n_k:k\in\w\}$. Observe that there is no loss of
generality in assuming  $U^{n+1}_k\subset U^n_k\subset U^n_{k+1}$   for all
$n,k\in\w$.

The equality $\bb=\w_1$ implies that $\w^\w$ is not productively Lindel\"of \cite[Remark 10.5]{vDo84},
and hence by \cite[Proposition~3.1]{RepZdo12}
 all productively Lindel\"of spaces have the Menger property.
 Since the class of productively Lindel\"of spaces is closed under
finite products, we conclude that all finite powers of $X$ have the Menger property.
 By \cite[Theorem~3.9]{COC2} there exists a sequence
$\la\V_n:n\in\w\ra$ such that $\V_n\in [\U_n]^{<\w}$ for all $n$ and
$\V=\bigcup_{n\in\w}\V_n$ is an $\w$-cover of $X$. It follows from our assumptions
on $\U_n$'s that
 we may assume  $|\V_n|=1$  for
all $n\in\w$, i.e.,
 $\V_n=\{U^n_{k_n}\}$ for some
$k_n\in\w$.

 For every $x\in X$ we shall denote by $I_\V(x)$ the set
  $\{n\in\w:x\in U^n_{k_n}\}$. By \cite[Lemma~2]{Zdo05}
 the  set $$\F=\{A\subset\w:I_\V(x)\subset^* A \mbox{ for some }x\in X\}$$
 is a countable union of images of $X$ under compact-valued upper
 semicontinuous maps. Therefore by Lemma~\ref{l1}
 $\G=\la\F\ra$
 is a countable union of images of finite powers of $X$ under compact-valued upper
 semicontinuous maps. Since $\V$ is an $\w$-cover of $X$, we conclude
that $\G\subset [\w]^\w$. By the methods of \cite[\S~3]{RepZdo12}
 it also follows that $\G$ is productively Lindel\"of.
 Two cases are possible.

1. $\G$ is meager. Then by Theorem~\ref{tal}  there exists an increasing  sequence $\la m_n:n\in\w  \ra$ of integers
such that every element of $\U$ meets all but finitely many intervals $[m_n,m_{n+1})$.
It suffices to observe that
$U_n=\bigcup\{U^m_{k_m}:m\in [m_n,m_{n+1})\}$ is $\U_n$-bounded for every $n\in\w$
and the sequence $\la U_n:n\in\w \ra$ is a $\gamma$-cover of $X$.

2. $\G$ is non-meager. By the assumption $(*)$ from above we can find a monotone surjection
$\phi:\w\to\w$ and an unbounded tower $\T\subset [\w]^{\w}$ of cardinality $\bb$
such that $\phi(\U)\supset \T$,
 which contradicts Lemma~\ref{l3} and thus completes our proof.
\end{proof}

\end{document}